\documentclass[12pt]{amsart}

\usepackage{amssymb,amsmath,latexsym}
\usepackage[varg]{pxfonts}
\usepackage{amsmath, amsthm, amscd, amsfonts, amssymb, graphicx, color}
\vfuzz2pt 
\hfuzz2pt 
\title[A product topology convergence scheme ]{A product topology convergence scheme for finding a function that it's values  are       common fixed points of  a family of   representations}
 \author{   Ebrahim  Soori  }
 \thanks{ \!\!\!\!\!\!\!\! \!\!2010 Mathematics Subject Classification:  \\ E-mail address: sori.ebrahim@sci.ui.ac.ir
  \\Tel: +98 9188521850 (E. Soori)}
\theoremstyle{plain}
\linespread{1.3}

\newtheorem{lem}{\textbf{Lemma}}[section]
\newtheorem{thm}[lem]{\textbf{Theorem}}
\newtheorem{co}[lem]{\textbf{Corollary}}

\theoremstyle{definition}

\theoremstyle{definition}
\theoremstyle{remark}


\renewcommand{\sc}{\mathcal{S}}

\newcommand{\ud}{\,\mathrm{d}}
\addtolength{\hoffset}{-2cm}
\textwidth=16cm

\begin{document}
\begin{large}

\maketitle
 \begin{center}
 \begin{normalsize}
 Lorestan University, Department  of Mathematics,  P. O. Box 465, Khoramabad, Lorestan, Iran.
 \end{normalsize}
 \end{center}
\begin{abstract}
\begin{normalsize}
In this paper, using  a family of   representations  of nonexpansive mappings, we introduce     an  algorithm in a product space $E^{I}$ consisting of all functions from a nonempty set $I$ to a Banach space $E$.  Then we prove  the product topology convergence   of  the proposed algorithm    to    an element of $E^{I}$ such that   it's  values are        the common fixed points  of      the  representations  of the family.
\end{normalsize}
\end{abstract}
\begin{normalsize}
   \textbf{keywords}:   Product topology; Fixed point; Nonexpansive mapping; Representation;  mean.
   \end{normalsize}

\section{ Introduction}
Let $C$ be a nonempty closed and convex subset of a Banach space $E$ and
$E^{*}$ be the dual space of $E$. Let $\langle.,.\rangle$   denote the pairing between $E$ and $E^{*}$. The
normalized duality mapping $J: E \rightarrow E^{*}$
is defined by
\begin{align*}
    J(x)=\{f \in E^{*}: \langle x, f \rangle= \|x\|^{2}=\|f\|^{2} \}
\end{align*}
for all $x \in E.$ In the sequel, we use $j$ to denote the single-valued normalized
duality mapping. Let $U = \{x \in E : \|x\| = 1\}$. $E$ is said to be smooth or said
to   have a G$\hat{a}$teaux differentiable norm if the limit
\begin{align*}
    \lim_{t\rightarrow 0}\frac{\|x+ty\|-\|x\|}{t}
\end{align*}
exists for each $ x, y \in U$. $E$ is said to have a uniformly G$\hat{a}$teaux differentiable
norm if for each $y \in U$, the limit is attained uniformly for all $ x \in U$. $E$ is said
to be uniformly smooth or said to   have a uniformly F$r\acute{e}$chet differentiable
norm if the limit is attained uniformly for $x, y \in U$. It is known that if the
norm of $E$ is uniformly G$\hat{a}$teaux differentiable, then the duality mapping $J$
is single valued and uniformly norm to weak$^{*}$ continuous on each bounded
subset of $E$. A Banach space $E$ is   smooth if the duality mapping $J$ of
$E$ is single valued. We know that if $E$ is smooth, then $J$ is norm to weak-star continuous; for more details, see  \cite{tn}.

Let $C$ be a nonempty closed and convex subset of a Banach space $E$.  A mapping $ T$ of $ C $ into itself is called nonexpansive if $\|Tx - Ty\| \leq \|x - y\|,$ for all $x, y \in C$ and a mapping $f$ is an $\alpha_{i}$-contraction on $E $ if  $ \|f (x) -f (y)\| \leq \alpha_{i} \|x - y\|, \;x, y \in E$  such that  $0 \leq\alpha_{i} < 1$.

In this paper   we introduce the following general   algorithm in the product space $E^{I}$ for finding  an element of $E^{I}$ such that   it's  values are        the common fixed points  of      the  representations    $\sc_{i} =\{T_{t,i}:t\in S\}$ of a semigroup $S$ as nonexpansive  mappings from $C_{i}$ into itself  with respect to   a left regular sequence of means   defined on an appropriate subspace of bounded real-valued
functions of the semigroup. On the other hand, our goal is to prove that  there exists a unique sunny nonexpansive retraction $ P_{i} $  of $ C_{i} $ onto $  \rm{Fix}( \sc_{i})$ and $ x_{i} \in C_{i} $ for each $  i \in I$ such that  the sequence  $\{g_{n}:I\rightarrow E \}$ in $E^{I}$       generated by
 \begin{equation*}
  \left\{
\begin{array}{lr}
g_{n}(i)= z_{n,i},\qquad i \in I, \\
      z_{n,i}=   \epsilon_{n} f_{i}(z_{n,i})+(1-\epsilon_{n})T_{\mu_{n,i}}z_{n,i} \qquad  i \in I,
\end{array} \right.
\end{equation*}
      converges to a function  $g:I\rightarrow E$  in $E^{I}$  defined by   $g(i)=P_{i}x_{i}$ in the product topology on $E^{I}$.

\section{preliminaries}
 Let $S$ be a semigroup. We denote by $B(S)$ the Banach space of all bounded real-valued functions defined on $ S $ with supremum
norm. For each $ s \in S$ and $f\in B(S)$ we define $l_{s}$ and  $r_{s}$ in $B(S)$  by\\ \;\indent$\quad (l_{s}f )(t) = f (st)$ ,\qquad $(r_{s}f )(t) = f (ts)$,\quad $ \;\;( t \in S)$.\\
Let $ X$ be a subspace of $B(S)$ containing 1 and let $ X^{*}$ be its topological dual. An element $\mu $ of $X^{*} $ is said to be a mean on $X$ if $\|\mu\|=\mu(1)=1$. We often write $\mu_{t}(f (t))$ instead of $ \mu(f )$ for $ \mu\in X^{*}$ and $f \in X$. Let $ X $ be left invariant (resp. right
invariant), i.e. $ l_{s}(X) \subset X$ (resp. $ r_{s}(X) \subset X$) for each $ s \in S$. A mean $\mu$ on $X$ is said to be left invariant (resp. right invariant) if $\mu(l_{s}f ) =\mu(f )$ (resp. $\mu(r_{s}f )= \mu(f )$) for each $s \in S $ and $f \in X$. $X $ is said to be left (resp. right) amenable if $X$ has a left (resp. right) invariant mean. $X$ is amenable if $X$ is both left and right amenable. As is well known, $B(S)$ is amenable when $S$ is a
commutative semigroup (see  page 29 of \cite{tn}). A net $\{\mu_{\alpha_{i}}\}$ of means on $X$ is said to be left regular if  $$\displaystyle\lim_{\alpha_{i}} \|l^{*}_{s}\mu_{\alpha_{i}}-\mu_{\alpha_{i}}\|= 0,$$  for
each $s \in S$, where $l_{s}^{*}$ is the adjoint operator of $l_{s}$.

Let $f$ be a  function of semigroup $S$ into a reflexive Banach space $E$ such that the weak closure of $\{f(t):t\in S\}$ is weakly
compact and let $X$ be a subspace of $B(S)$ containing all the functions $t \rightarrow \left<f(t),x^{\ast}\right>$ with  $ x^{\ast} \in E^{\ast}  $. We know from \cite{hi} that for any $ \mu\in X^{\ast}$, there
exists a unique element $ f_{\mu}$ in $E$ such that $\left<f_{\mu},x^{\ast}\right>=\mu_{t}\left<f(t),x^{\ast}\right>$
for all $ x^{\ast}\in E^{\ast} $. we denote such $f_{\mu}$ by $\int\!f(t)\ud \mu(t)$. Moreover, if $ \mu$ is a mean on $X$ then from \cite{ki}, $\int\! f(t)\ud \mu(t) \in \overline{\rm{co}}\,\{f(t):t\in S\}$.

Let $C$ be a nonempty closed and convex subset of $E$. Then, a family   $ \sc=\{T_{s}:s\in S\} $ of mappings from $C$ into itself is said to be a  representation of $S$ as nonexpansive mapping on $C$ into itself if $ \sc$ satisfies the following :\\
(1) $T_{st}x=T_{s}T_{t}x$ for all $s, t \in S$ and $x\in C$;\\
(2) for every $s \in S$ the  mapping $T_{s}: C \rightarrow C$ is nonexpansive.\\
 We denote by Fix($ \sc$)
the set of common fixed points of $\sc$, that is \\ Fix($ \sc$)=$\bigcap_{ s\in S}\{x\in C: T_{s}x=x  \}$.


\begin{thm}{\rm(\cite{said})}.\label{tu}
Let $ S $ be a semigroup, let $ C $ be a closed, convex subset of a
reflexive Banach space $  E$,  $ \sc=\{T_{s}:s\in S\} $ be a representation of $S$ as nonexpansive mapping from $C$ into
itself such that weak closure of  $\lbrace T_{t}x : t \in S \rbrace $ is weakly compact for each $ x \in C $ and $X$ be a subspace of $B(S)$ such that $ 1 \in X $ and the mapping $t \rightarrow \left<T (t)x, x^{*}\right>$ be an element of $ X $ for
each $x \in C$ and $ x^{*} \in E$, and $\mu$ be a mean on $X$.
If we write $T_{\mu}x $ instead of
$\int\!T_{t}x\, d\mu(t),\/$
 then the following hold.\/\\{\rm({i})}\,  $T_{\mu}$ is a nonexpansive mapping from $C$ into $C$.\/\\{\rm({ii})} $T_{\mu}x=x $  for each  $x \in Fix(\sc) $.\/\\{\rm({iii})} $T_{\mu}x \in \overline{co}\,\{T_{t}x:t\in S\}$ for each $x\in C$.\/ \\{\rm({iv})} If $  X$ is $ r_{s} $-invariant  for
  each $ s \in S $ and  $ \mu $ is right invariant, then $T_{\mu}T_{t} =T_{\mu} $ for each $ t \in S $.
\end{thm}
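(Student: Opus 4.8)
The plan is to establish each of the four assertions in turn, leaning on the defining property of the integral $T_{\mu}x=\int T_t x\,d\mu(t)$, namely $\langle T_{\mu}x,x^{*}\rangle=\mu_t\langle T_t x,x^{*}\rangle$ for every $x^{*}\in E^{*}$, together with the fact that $\mu$ is a mean. Before touching the individual parts, I would verify that $T_{\mu}$ is well defined on all of $C$: the hypothesis that the weak closure of $\{T_t x:t\in S\}$ is weakly compact for each $x$, combined with the assumption that $t\mapsto\langle T_t x,x^{*}\rangle$ lies in $X$, places us exactly in the setting of the cited existence result, so $\int T_t x\,d\mu(t)$ exists as a unique element of $E$ for each $x\in C$, and since $\mu$ is a mean we already get $T_{\mu}x\in\overline{\mathrm{co}}\,\{T_t x:t\in S\}\subset C$ by the convexity and closedness of $C$. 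That observation simultaneously disposes of part (iii).

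For part (i), the goal is the estimate $\|T_{\mu}x-T_{\mu}y\|\le\|x-y\|$. First I would pick, for the vector $T_{\mu}x-T_{\mu}y$, a norming functional $x^{*}\in E^{*}$ with $\|x^{*}\|=1$ and $\langle T_{\mu}x-T_{\mu}y,x^{*}\rangle=\|T_{\mu}x-T_{\mu}y\|$. Then I would rewrite this pairing using linearity of the integral representation as $\mu_t\langle T_t x-T_t y,x^{*}\rangle$. Since $\mu$ is a mean, it is a positive linear functional of norm one, so $\mu_t\langle T_t x-T_t y,x^{*}\rangle\le\sup_{t}\langle T_t x-T_t y,x^{*}\rangle\le\sup_t\|T_t x-T_t y\|$, and nonexpansiveness of each $T_t$ bounds every term by $\|x-y\|$. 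Chaining these gives (i). Part (ii) is the simplest: if $x\in\mathrm{Fix}(\sc)$ then $T_t x=x$ for every $t$, so $t\mapsto\langle T_t x,x^{*}\rangle$ is the constant function $\langle x,x^{*}\rangle$; applying $\mu$ and using $\mu(1)=1$ yields $\langle T_{\mu}x,x^{*}\rangle=\langle x,x^{*}\rangle$ for all $x^{*}$, whence $T_{\mu}x=x$.

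Part (iv) is where the real work lies, and I expect it to be the main obstacle. The claim $T_{\mu}T_t=T_{\mu}$ must be read as operator identity on $C$, and the natural route is again through the weak characterization: it suffices to show $\langle T_{\mu}T_t x,x^{*}\rangle=\langle T_{\mu}x,x^{*}\rangle$ for every $x\in C$ and $x^{*}\in E^{*}$. The left side expands to $\mu_s\langle T_s(T_t x),x^{*}\rangle$, and by the semigroup property $T_s T_t=T_{st}$ this equals $\mu_s\langle T_{st}x,x^{*}\rangle$. The key point is to recognize that, for the fixed function $\varphi(s)=\langle T_s x,x^{*}\rangle$ in $X$, the map $s\mapsto\varphi(st)$ is precisely $(r_t\varphi)(s)$; here the hypothesis that $X$ is $r_s$-invariant guarantees $r_t\varphi\in X$ so that $\mu$ may legitimately be applied. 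Then right invariance of $\mu$ gives $\mu(r_t\varphi)=\mu(\varphi)$, which is exactly $\langle T_{\mu}x,x^{*}\rangle$. The delicate bookkeeping is keeping the two semigroup variables $s$ (the integration variable acted on by $\mu$) and $t$ (fixed) straight, and confirming that the right-invariance hypothesis is invoked on $r_t$ rather than on $l_t$; once the identification with $r_t\varphi$ is made explicit, the conclusion follows by a single application of right invariance.
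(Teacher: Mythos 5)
Your proof is correct in all four parts: the well-definedness and part (iii) follow from the cited results of Hirano--Kido--Takahashi, the norming-functional argument with the mean inequality $\mu(g)\leq \sup_{t}g(t)$ gives (i), the constant-function observation with $\mu(1)=1$ gives (ii), and the identification of $s\mapsto\langle T_{st}x,x^{*}\rangle$ with $r_{t}\varphi$ followed by right invariance gives (iv), with the $l_{t}$ versus $r_{t}$ bookkeeping handled correctly under the paper's convention $(r_{s}f)(t)=f(ts)$. Note, however, that the paper itself offers no proof of this statement; it is imported verbatim from the reference \cite{said}, so there is no internal argument to compare against. Your argument is the standard one for such mean-integral results and is a sound, self-contained reconstruction of what the cited source establishes.
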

\textbf{Remark}: From, Theorem 4.1.6 in \cite{tn}, every uniformly convex Banach space is strictly convex and reflexive.

 \begin{lem}\cite{ss}\label{sunyn}
 Let $S$ be a  semigroup and let $C$ be a compact convex subset of a real   strictly convex  and smooth Banach space   $E$.  Suppose that   $ \sc=\{T_{s}:s\in S\}$ be a representation of $S$ as nonexpansive mapping from $C$ into itself. Let $X$ be a left invariant subspace of $B(S)$ such that $1\in X$, and the function $t\mapsto \langle T_{t}x,x^{*}\rangle$ is an element of $X$ for each $x\in C$ and $x^{*}\in E^{*}$.  If $ \mu $  is a left invariant mean on  $ X $, then
 $\rm{ Fix}(T_{\mu})=T_{\mu}C=\rm{ Fix}(\sc) $  and there
exists a unique sunny nonexpansive retraction from $ C $ onto $\rm{ Fix}(\sc) $.
\end{lem}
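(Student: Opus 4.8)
The plan is to reduce the first assertion to a single inclusion and then read off the retraction. The inclusions $\mathrm{Fix}(\sc)\subseteq\mathrm{Fix}(T_\mu)$ and $\mathrm{Fix}(T_\mu)\subseteq T_\mu C$ are free: the first is exactly Theorem \ref{tu}(ii), and the second holds because any fixed point of $T_\mu$ is its own image under $T_\mu$. Thus the entire content of the chain $\mathrm{Fix}(\sc)=\mathrm{Fix}(T_\mu)=T_\mu C$ is carried by the reverse inclusion $T_\mu C\subseteq\mathrm{Fix}(\sc)$, that is, the assertion that $T_s(T_\mu x)=T_\mu x$ for every $x\in C$ and $s\in S$. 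The companion fact $\mathrm{Fix}(T_\mu)\subseteq\mathrm{Fix}(\sc)$, needed to close the equalities, is of the same nature, and I would treat both together as the one nontrivial point.

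First I would turn left invariance of $\mu$ into averaging identities. Writing $z=T_\mu x$ and using the representation law $T_{st}=T_sT_t$ together with $\mu(l_sh)=\mu(h)$ applied to the scalar functions $h(t)=\langle T_tx,x^{*}\rangle$, one obtains, for every $s\in S$ and $x^{*}\in E^{*}$, $\langle z,x^{*}\rangle=\mu_t\langle T_{st}x,x^{*}\rangle=\mu_t\langle T_s(T_tx),x^{*}\rangle$. By the defining weak property of the vector-valued integral this is the vector identity $z=\int T_s(T_tx)\,\mathrm{d}\mu(t)$, valid simultaneously for all $s$. In words, $z$ is the $\mu$-barycenter not only of the orbit $\{T_tx\}$ but of every left-translate $\{T_sT_tx\}$, and the same computation applied to a point $u=T_\mu u$ shows $u=\int T_tu\,\mathrm{d}\mu(t)$, with the orbit $\{T_tu\}$ lying, for any common fixed point $p$ of $\sc$ (such $p$ exist by the fixed-point property of the left-amenable representation on the compact convex set $C$), in the ball of radius $\|u-p\|$ about $p$ by nonexpansiveness.

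The main obstacle is to pass from these barycenter identities to the pointwise equalities $T_su=u$, and this is precisely where strict convexity and compactness of $C$ are indispensable. Testing $u=\int T_tu\,\mathrm{d}\mu(t)$ against the (single-valued, by smoothness) functional $j=J(u-p)$ gives $\|u-p\|^{2}=\mu_t\langle T_tu-p,j\rangle$ while each integrand is at most $\|u-p\|^{2}$, so on average the orbit saturates the bounding sphere $\{v:\|v-p\|=\|u-p\|\}$; since the sphere of a strictly convex space contains no segments, its supporting hyperplane with normal $j$ meets the ball only at $u$. The delicate point, which I expect to be the technical heart of the whole lemma, is that a mean need not be faithful, so saturation ``on average'' does not by itself force $T_tu=u$ for each individual $t$; the standard resolution combines the compactness of $\overline{\{T_tx:t\in S\}}$ with strict convexity and the full strength of left invariance (the identity holding for every $s$ at once), in the manner of the nonlinear ergodic theorems of Bruck and of Lau--Takahashi, collapsing the orbit onto its barycenter and yielding both $\mathrm{Fix}(T_\mu)\subseteq\mathrm{Fix}(\sc)$ and $T_\mu C\subseteq\mathrm{Fix}(\sc)$.

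Finally I would deduce the retraction statement. Once $\mathrm{Fix}(\sc)=\mathrm{Fix}(T_\mu)=T_\mu C$ is in hand, $T_\mu$ carries $C$ onto $\mathrm{Fix}(\sc)$ and restricts to the identity there, so $\mathrm{Fix}(\sc)$ is a nonexpansive retract of $C$; moreover, as an intersection $\bigcap_{s}\mathrm{Fix}(T_s)$ of fixed-point sets of nonexpansive maps in a strictly convex space it is closed and convex. To upgrade to a \emph{sunny} retraction and to obtain uniqueness I would invoke smoothness of $E$: a nonexpansive retraction $P$ onto a convex $D$ is sunny exactly when $\langle x-Px,\,j(y-Px)\rangle\le 0$ for all $x\in C$ and $y\in D$, and since the duality map $j$ is single valued this variational inequality determines $Px$ uniquely, so at most one sunny nonexpansive retraction exists. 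Its existence then follows from the classical theory (Reich, Bruck) that in a smooth, strictly convex, reflexive Banach space a nonexpansive retract of a convex set admits a sunny nonexpansive retraction; applied to $\mathrm{Fix}(\sc)$ this produces the required $P$ and completes the proof.
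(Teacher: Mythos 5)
There is no internal proof to compare against here: the paper states Lemma \ref{sunyn} as a quoted result from Saeidi \cite{ss} and never proves it, so your proposal has to stand entirely on its own --- and it does not. The parts you get right are the reduction of the chain of equalities to the single inclusion $T_{\mu}C\subseteq \mathrm{Fix}(\sc)$ (after which $\mathrm{Fix}(T_{\mu})\subseteq T_{\mu}C\subseteq \mathrm{Fix}(\sc)\subseteq \mathrm{Fix}(T_{\mu})$ closes automatically), the averaging identity $T_{\mu}x=\int T_{s}(T_{t}x)\,\mathrm{d}\mu(t)$ from left invariance, the observation that strict convexity makes supporting hyperplanes touch balls in at most one point, and the uniqueness of a sunny nonexpansive retraction in a smooth space via the inequality $\langle x-Px,J(y-Px)\rangle\le 0$. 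But the step you yourself call ``the technical heart of the whole lemma'' --- passing from the barycenter identities to the pointwise equalities $T_{s}u=u$, despite the fact that a mean is not faithful --- is never carried out; the appeal to ``the standard resolution \ldots in the manner of the nonlinear ergodic theorems of Bruck and of Lau--Takahashi'' is a gesture, not an argument, and that step \emph{is} the lemma. Moreover, the auxiliary fact you lean on, that a common fixed point $p\in\mathrm{Fix}(\sc)$ exists ``by the fixed-point property of the left-amenable representation,'' is not available from the hypotheses and is essentially circular: nothing here is assumed left amenable; the only datum is a left invariant mean on the particular subspace $X$, whereas the fixed point theorems of Takahashi and Lau require a left invariant mean on $B(S)$ or on the space of almost periodic functions. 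Nonemptiness of $\mathrm{Fix}(\sc)$ is itself part of the conclusion of the lemma (it follows from $T_{\mu}C\subseteq\mathrm{Fix}(\sc)$ and $C\neq\emptyset$ --- the very inclusion you are trying to prove).

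The second genuine gap is the existence half of the retraction statement. You attribute it to ``classical theory (Reich, Bruck): in a smooth, strictly convex, reflexive Banach space a nonexpansive retract of a convex set admits a sunny nonexpansive retraction.'' No such classical theorem exists at that level of generality: Reich's existence theorem for sunny nonexpansive retractions onto fixed-point sets requires uniform smoothness (or a uniformly G\^{a}teaux differentiable norm together with additional structure), and Bruck's retract theory produces nonexpansive retractions that need not be sunny. In the present setting --- compact $C$, merely smooth and strictly convex $E$ --- the sunny retraction has to be constructed, e.g., by a Browder-type implicit scheme $z_{\epsilon}=\epsilon x+(1-\epsilon)T_{\mu}z_{\epsilon}$, with compactness used to extract limits and smoothness to verify the variational inequality; this is essentially what Theorem \ref{g1} of the paper itself does (with the contractions $f_{i}$), and historically this is exactly the point on which the Lau--Miyake--Takahashi argument had the gap that Saeidi's note \cite{ss} was written to repair, so it cannot be waved through as classical. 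In short, both genuinely hard ingredients of the lemma --- the ergodic collapse $T_{\mu}C\subseteq\mathrm{Fix}(\sc)$ and the existence of the sunny retraction --- are delegated in your proposal to citations that either do not apply under these hypotheses or do not exist in the stated generality.
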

Let $D$ be a subset of $B$ where $B$ is a subset of a Banach space $E$ and let $P$ be a retraction of $B$ onto $D$, that is,
$Px = x$ for each $x \in D$. Then $P$ is said to be sunny, if for each $x \in B$ and $t\geq0$ with $Px + t (x - Px) \in B$,
$P(Px + t (x - Px)) = Px$.
A subset $D$ of $B$ is said to be a sunny nonexpansive retract of $B$ if there exists a sunny nonexpansive retraction $P$ of
$B$ onto $D$.
We know that if $E$ is smooth and $P$ is a retraction of $B$ onto $D$, then $P$ is sunny and nonexpansive if and
only if for each $x \in B$ and $z \in D$,
$\langle  x-Px\,,\,J(z-Px)\rangle    \leq 0$.\\
For more details, see  \cite{tn}.

 Throughout the rest of this paper, the open ball of radius $r $ centered at 0 is denoted by $ B_{r}$. Let   $C$ be a nonempty closed  convex subset of a     Banach space  $E$. For $\epsilon > 0 $ and a mapping $ T : C \rightarrow C$, we let $ F_{\epsilon}(T)$ be the set of $\epsilon$-approximate fixed points of $T$, i.e. $ F_{\epsilon}(T) = \{x \in C : \|x - Tx\| \leq\epsilon\}$.

\section{Main result}

In this section, we deal with   a product topology convergence approximation scheme for finding an  element of  $E^{I}$  such that   it's  values are        the common fixed points  of      the  representations  of a  family of  the representations of   nonexpansive mappings.

\begin{thm}\label{g1}
Let $I$ be a nonempty set and  $S$ be a  semigroup. Let   $C_{i}$ be a nonempty compact convex subset of a real    strictly convex  and   reflexive  smooth Banach space $E$ for each $i \in I$.  Consider     the product space  $E^{I}$ with the  product topology generated by the strong topologies  on $E$ for each $i \in I$.  Suppose that   $ \sc_{i}=\{T_{s,i}:s\in S\}$ be a representation of $S$ as nonexpansive mapping from $C_{i}$ into itself such that $  \rm{Fix}(\sc_{i})\neq \emptyset$ for each $i \in I$.
 Let $X$ be a left invariant subspace of $B(S)$ such that $1\in X$, and the function $t\mapsto \langle T_{t}x,x^{*}\rangle$ is an element of $X$ for each $x\in C_{i}$ and $x^{*}\in E^{*}$. Let $\{\mu_{n}\}$ be a left regular sequence of  means on $X$.  Suppose that $f_{i}$ is an $\alpha_{i}$-contraction on $ C_{i}$ for each $  i \in I$. Let $\epsilon_{n}$ be a sequence in $(0, 1)$ such that $\displaystyle \lim_{n} \epsilon_{n}=0$.
  Then there exists a unique sunny nonexpansive retraction $ P_{i} $  of $ C_{i} $ onto $  \rm{Fix}( \sc_{i})$ and $ x_{i} \in C_{i} $ for each $  i \in I$ such that  the sequence  $\{g_{n}:I\rightarrow E \}$ in $E^{I}$       generated by
 \begin{equation}\label{4}
  \left\{
\begin{array}{lr}
g_{n}(i)= z_{n,i},\qquad i \in I, \\
      z_{n,i}=   \epsilon_{n} f_{i}(z_{n,i})+(1-\epsilon_{n})T_{\mu_{n,i}}z_{n,i} \qquad  i \in I,
\end{array} \right.
\end{equation}
      converges to the function  $g:I\rightarrow E$ defined by   $g(i)=P_{i}x_{i}$ in the product topology on $E^{I}$.
\end{thm}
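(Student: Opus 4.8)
The plan is to reduce the product-topology statement to coordinatewise strong convergence: since $E^{I}$ carries the product of the strong topologies, $g_{n}\to g$ in $E^{I}$ is equivalent to $z_{n,i}=g_{n}(i)\to g(i)$ in $E$ for every fixed $i\in I$, so I fix $i$ and work inside the compact convex set $C_{i}$. First I would verify that each $z_{n,i}$ is well defined: by Theorem \ref{tu}(i) the averaging operator $T_{\mu_{n,i}}$ maps $C_{i}$ into $C_{i}$ and is nonexpansive, so $W_{n,i}(x)=\epsilon_{n}f_{i}(x)+(1-\epsilon_{n})T_{\mu_{n,i}}x$ sends the convex set $C_{i}$ into itself with Lipschitz constant $1-\epsilon_{n}(1-\alpha_{i})<1$; the Banach contraction principle on the complete space $C_{i}$ yields the unique fixed point $z_{n,i}$. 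The existence and uniqueness of the sunny nonexpansive retraction $P_{i}$ onto $\mathrm{Fix}(\sc_{i})$ comes from Lemma \ref{sunyn}, applied with a left invariant mean $\mu$ produced as a weak$^{*}$ cluster point of $\{\mu_{n}\}$: such a cluster point exists since the means form a weak$^{*}$ compact set, and it is left invariant because $\|l_{s}^{*}\mu_{n}-\mu_{n}\|\to0$ while $l_{s}^{*}$ is weak$^{*}$-to-weak$^{*}$ continuous. Finally I define $x_{i}^{*}$ as the unique fixed point of the $\alpha_{i}$-contraction $P_{i}\circ f_{i}$ on $C_{i}$, set $x_{i}=f_{i}(x_{i}^{*})$, and observe $g(i)=P_{i}x_{i}=x_{i}^{*}$; the target is then $z_{n,i}\to x_{i}^{*}$.

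The first computation is that $z_{n,i}-T_{\mu_{n,i}}z_{n,i}=\epsilon_{n}\bigl(f_{i}(z_{n,i})-T_{\mu_{n,i}}z_{n,i}\bigr)$, whence $\|z_{n,i}-T_{\mu_{n,i}}z_{n,i}\|\le\epsilon_{n}M_{i}\to0$, where $M_{i}$ bounds the compact set $C_{i}$. The core of the argument is that every strong cluster point of $\{z_{n,i}\}$ lies in $\mathrm{Fix}(\sc_{i})$. Given a subsequence $z_{n_{k},i}\to z$ (which exists by compactness of $C_{i}$), I pass to a subnet along which $\mu_{n_{k}}$ converges weak$^{*}$ to a mean $\mu$, left invariant by left regularity. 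The key point is that $T_{\mu_{n_{k},i}}z_{n_{k},i}\to T_{\mu,i}z$ weakly: for each $x^{*}\in E^{*}$ the scalar integrands $t\mapsto\langle T_{t,i}z_{n_{k},i},x^{*}\rangle$ converge to $t\mapsto\langle T_{t,i}z,x^{*}\rangle$ uniformly on $S$ (because the $T_{t,i}$ are nonexpansive and $z_{n_{k},i}\to z$), so $\langle T_{\mu_{n_{k},i}}z_{n_{k},i},x^{*}\rangle\to\langle T_{\mu,i}z,x^{*}\rangle$. Since the residual estimate also gives $T_{\mu_{n_{k},i}}z_{n_{k},i}\to z$ strongly, uniqueness of weak limits forces $z=T_{\mu,i}z$, and Lemma \ref{sunyn} identifies $\mathrm{Fix}(T_{\mu,i})=\mathrm{Fix}(\sc_{i})$, so $z\in\mathrm{Fix}(\sc_{i})$.

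To pin down the limit I use the viscosity estimate. As $x_{i}^{*}\in\mathrm{Fix}(\sc_{i})$, Theorem \ref{tu}(ii) gives $T_{\mu_{n,i}}x_{i}^{*}=x_{i}^{*}$, and combining the defining equation \eqref{4} with the elementary properties $\langle a,J(b)\rangle\le\|a\|\,\|b\|$ and $\langle b,J(b)\rangle=\|b\|^{2}$ yields $(1-\alpha_{i})\|z_{n,i}-x_{i}^{*}\|^{2}\le\langle f_{i}(x_{i}^{*})-x_{i}^{*},J(z_{n,i}-x_{i}^{*})\rangle$. Along a convergent subsequence $z_{n_{k},i}\to z\in\mathrm{Fix}(\sc_{i})$, smoothness of $E$ makes $J$ norm-to-weak$^{*}$ continuous, so the right-hand side tends to $\langle f_{i}(x_{i}^{*})-x_{i}^{*},J(z-x_{i}^{*})\rangle$, which is $\le0$ by the variational characterization of the sunny nonexpansive retraction applied to $x_{i}^{*}=P_{i}f_{i}(x_{i}^{*})$. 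Hence $z=x_{i}^{*}$, so every convergent subsequence of $\{z_{n,i}\}$ has limit $x_{i}^{*}$; compactness of $C_{i}$ then forces $z_{n,i}\to x_{i}^{*}=P_{i}x_{i}$, which is exactly the required coordinatewise convergence.

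I expect the main obstacle to be the middle step, namely controlling the cluster points of the implicitly defined iterates. The difficulty is that the averaging operators $T_{\mu_{n,i}}$ vary with $n$ and are merely nonexpansive rather than linear, so one cannot simply commute $T_{s,i}$ past the mean; the resolution is to transfer the invariance to the level of means, where the weak$^{*}$ cluster point $\mu$ is genuinely left invariant, and then to invoke Lemma \ref{sunyn} to equate $\mathrm{Fix}(T_{\mu,i})$ with $\mathrm{Fix}(\sc_{i})$. The uniform-in-$t$ convergence of the scalar integrands, resting on nonexpansiveness together with the strong convergence $z_{n_{k},i}\to z$, is precisely what couples the weak$^{*}$ limit of the means to the strong limit of the iterates, and is the delicate point of the whole proof.
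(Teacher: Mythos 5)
Your proof is correct, and while its overall scaffolding (implicit viscosity iterates, cluster points in the common fixed point set, viscosity inequality, sunny retraction) matches the paper's, your treatment of the central step is genuinely different. The paper's Step 2 proves the asymptotic fixed point property $\lim_{n}\|z_{n,i}-T_{t,i}z_{n,i}\|=0$ for every $t\in S$ by a quantitative argument: it combines the Shioji--Takahashi lemma $\overline{\mathrm{co}}\,F_{\delta}(T_{t,i})+2B_{\delta}\subseteq F_{\epsilon}(T_{t,i})$, the Atsushiba--Takahashi ergodic estimate for the averages $\frac{1}{N+1}\sum_{j=0}^{N}T_{t^{j}s,i}$, and the norm estimate $\|\mu_{n}-l_{t^{j}}^{*}\mu_{n}\|\to 0$ supplied by left regularity, to show $T_{\mu_{n,i}}y$, and then $z_{n,i}$, eventually lies in $F_{\epsilon}(T_{t,i})$; membership of strong cluster points in $\mathrm{Fix}(\mathcal{S}_{i})$ is then immediate. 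You bypass all of this: you use left regularity only to extract a weak$^{*}$ cluster point $\mu$ of $\{\mu_{n}\}$, which is automatically a left invariant mean, show that any strong cluster point $z$ of $\{z_{n,i}\}$ satisfies $z=T_{\mu,i}z$ (via the residual estimate $\|z_{n,i}-T_{\mu_{n,i}}z_{n,i}\|\le\epsilon_{n}M_{i}$ together with the uniform-in-$t$ convergence of the scalar integrands, which correctly couples the weak$^{*}$ limit of the means to the strong limit of the iterates along a subnet), and then invoke the identity $\mathrm{Fix}(T_{\mu,i})=\mathrm{Fix}(\mathcal{S}_{i})$ from Lemma \ref{sunyn}. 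Your route is shorter and dispenses with the two external ergodic-type results, at the price of leaning on the full strength of Lemma \ref{sunyn} (compactness, strict convexity, smoothness), which the theorem needs anyway for the retraction $P_{i}$; as a bonus you make explicit the existence of a left invariant mean, which the paper uses only tacitly when it applies Lemma \ref{sunyn}. Conversely, the paper's Step 2 is purely metric, needs neither strict convexity nor smoothness, and yields strictly more information (a uniform approximate-fixed-point property), so it would survive in settings where Lemma \ref{sunyn} is unavailable. Your endgame, namely the inequality $(1-\alpha_{i})\|z_{n,i}-x_{i}^{*}\|^{2}\le\langle f_{i}(x_{i}^{*})-x_{i}^{*},J(z_{n,i}-x_{i}^{*})\rangle$, the variational characterization of the sunny nonexpansive retraction, norm-to-weak$^{*}$ continuity of $J$, and uniqueness of subsequential limits plus compactness, coincides in substance with the paper's Steps 4--5, which run the same inequality (with an inessential factor of $2$) through a $\limsup$ argument instead; your $x_{i}=f_{i}(x_{i}^{*})$ is exactly the paper's fixed point of $f_{i}P_{i}$, so the two constructions produce the same limit function $g$.
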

\begin{proof}

        From Proposition 1.7.3 and Theorem 1.9.21 in \cite{Ag}, every   compact  subset $C_{i}$  of a  reflexive   Banach space  $E$, is weakly compact and   by Proposition 1.9.18 in \cite{Ag}, every closed convex subset of a weakly compact  subset $C_{i}$  of a      Banach space  $E$ is weakly compact  and  by Proposition 1.9.13 in \cite{Ag},     each convex subset $C_{i}$   of a normed space  $E$  is weakly closed if and only if $C_{i}$ is closed.
 Hence, weak closure of   $\lbrace T_{t,i}x : t \in S \rbrace $ is weakly compact for each $ x \in C_{i} $.

   The proof is divided  into six   steps.

  Step 1.  The existence of $z_{n,i}$ which satisfies \eqref{4}.\\Proof.
	This concludes   from the fact that the following  mapping $N_{n,i}$ is a contraction on $C_{i}$ for every $ n \in \mathbb{N}$ and $  i \in I$,
  \begin{align*}
 N_{n,i}x_{i}:=\epsilon_{n} f_{i}(x_{i})+(1-\epsilon_{n})  T_{\mu_{n,i}}x_{i}\quad(x_{i}\in C_{i}).
\end{align*}
Indeed, put  $\beta_{n}=(1+\epsilon_{n}(  \alpha_{i}-1) )$, then  $0 \leq\beta_{n} < 1 \; (n\in\mathbb{N})$. Hence, we have,
\begin{align*}
\|N_{n,i}x_{i}-N_{n,i}y_{i}\|  \leq &\epsilon_{n}   \|f_{i}(x_{i})-f_{i}(y_{i})\|+ \left(1-\epsilon_{n}\right)\|T_{\mu_{n,i}}x_{i}-T_{\mu_{n,i}}y_{i}\| \\  \leq & \epsilon_{n}  \alpha_{i} \|x_{i}-y_{i}\|+(1-\epsilon_{n})\|x_{i}-y_{i}\| \\ =&(1+\epsilon_{n}(  \alpha_{i}-1) )\|x_{i}-y_{i}\|=\beta_{n}\|x_{i}-y_{i}\|.
\end{align*}
Hence,  by Banach Contraction Principle (\cite{tn}), there exists a unique point $z_{n,i}\in C_{i}$   that $N_{n,i}z_{n,i}=z_{n,i}$.

  Step 2.  $\lim_{n\rightarrow \infty}\|z_{n,i}-T_{t,i}z_{n,i}\|=0 $,  for all  $  i \in I$ and  $t \in S.$\\
Proof.  Consider  $t \in S$,   $  i \in I$    and let $\epsilon>0  $.  By  Lemma 1 in \cite{shk1},  there exists $\delta>0  $ such that
 $  \overline{\text{co}}F_{\delta}(T_{t,i}) +2B_{\delta} \subseteq F_{\epsilon}(T_{t,i}) $. From   Corollary 2.8   in \cite{At2}, there
also exists a natural number $N $ such that
\begin{equation}\label{12}
    \Big\|\frac{1}{N+1}\sum_{j=0}^{N}T_{t^{j}s,i}y-T_{t,i}\Big(\frac{1}{N+1}\sum_{j=0}^{N}T_{t^{j}s,i}y \Big)\Big\| \leq \delta,
\end{equation}
for all $  s \in S $ and $y \in C_{i}$. Let $p_{i} \in \rm{Fix}(\sc_{i})$ and $M_{0,i}$ be a positive number such that, $\displaystyle\sup_{y \in C_{i}}\|y\|\leq M_{0,i}$. Let $t \in S$, from the fact that $\{\mu_{n}\} $ is strongly left regular, there exists $N_{0} \in \mathbb{N} $ such that
$\|\mu_{n}-l^{*}_{t^{j}}\mu_{n}\|\leq \frac{\delta}{(3M_{0,i})}$ for $n \geq N_{0}$ and $j = 1, 2, \cdots , N$. Therefore, we conclude
\begin{align}
\sup_{y \in C_{i}} \Big \|&T_{\mu_{n,i}}y-\int\! \frac{1}{N+1}\sum_{j=0}^{N}T_{t^{j}s,i}y\ud \mu_{n}(s) \Big  \|  \nonumber \\
=&\sup_{y\in C_{i}}\sup_{\|x^{*}\|=1} \Big | \langle T_{\mu_{n,i}}y ,x^{*}\rangle-\Big \langle \int\!\frac{1}{N+1}\sum_{j=0}^{N}T_{t^{j}s,i}y\ud \mu_{n}(s),x^{*}  \Big \rangle \Big  | \nonumber\\
   = &\sup_{y\in C_{i}}\sup_{\|x^{*}\|=1} \Big |\frac{1}{N+1}\sum_{i=0}^{N}(\mu_{n})_{s}\langle T_{s,i}y,x^{*}\rangle-\frac{1}{N+1}\sum_{j=0}^{N}(\mu_{n})_{s}\langle  T_{t^{j}s,i}y,x^{*}\rangle \Big  | \nonumber \\
   \leq & \frac{1}{N+1}\sum_{j=0}^{N}\sup_{y\in C_{i}}\sup_{\|x^{*}\|=1} \Big|(\mu_{n})_{s}\langle T_{s,i}y,x^{*}\rangle-(l^{*}_{t^{j}}\mu_{n})_{s}\langle T_{s,i}y,x^{*}\rangle \Big|\nonumber\\
   \leq &\max_{j=1,2,\cdots,N}\|\mu_{n}-l^{*}_{t^{j}}\mu_{n}\|(M_{0,i}+2\|p_{i}\|)\nonumber\\
   \leq &\max_{j=1,2,\cdots,N}\|\mu_{n}-l^{*}_{t^{j}}\mu_{n}\|(3M_{0,i})\nonumber \\ \leq&\delta\quad \rm{( n\geq N_{0})}.\label{13}
\end{align}
Applying  Theorem \ref{tu}, we conclude
\begin{equation}\label{14}
\int\!\frac{1}{N+1}\sum_{j=0}^{N}T_{t^{j}s,i}y\ud \mu_{n}(s)\in \overline{\text{co}}\left\{\frac{1}{N+1}\sum_{j=0}^{N}T_{t^{j},i}(T_{s,i}y):s\in S\right\}.
\end{equation}
we conclude from \eqref{12}-\eqref{14} that
\begin{align*}
 T_{\mu_{n,i}}y&\in \overline{\text{co}}\left\{\frac{1}{N+1}\sum_{i=0}^{N}T_{t^{j}s,i}y:s\in S\right\}+B_{\delta} \\
&\subset\overline{\text{co}}F_{\delta}(T_{t,i})+2B_{\delta}  \subset F_{\epsilon}(T_{t,i}),
\end{align*}
for all $y\in C_{i}$ and $n \geq N_{0}$. Hence,
$
\displaystyle \limsup_{n\rightarrow\infty}\sup_{y\in C_{i}}\|T_{t,i}(T_{\mu_{n,i}}y)-T_{\mu_{n,i}}y\|\leq \epsilon.
$
Because  $\epsilon > 0 $ is arbitrary, we get
\begin{align}\label{tun}
 \limsup_{n\rightarrow\infty}\sup_{y\in C_{i}}\|T_{t,i}(T_{\mu_{n,i}}y)-T_{\mu_{n,i}}y\|=0.
\end{align}
Let $t\in S $ and $\epsilon > 0$, then there exists $\delta > 0$, which satisfies \eqref{12}. Put $L_{0,i}=(1+\alpha_{i})2M_{0,i}+\| f_{i}(p_{i})-p_{i}\|$. Now, by the condition that  $\displaystyle \lim_{n} \epsilon_{n}=0$ and using  \eqref{tun}   there exists a natural number $N_{1}$ such that $T_{\mu_{n,i}}y \in F_{\delta}(T_{t,i})$ for each $ y  \in C_{i}$ and $\epsilon_{n}<\frac{\delta}{2L_{0,i}}$ for each $n \geq N_{1}$. Since  $ p_{i} \in \rm{Fix(\sc_{i}) }$, we conclude
\begin{align*}
\epsilon_{n}\|  f_{i}(z_{n,i})-& T_{\mu_{n,i}}z_{n,i}\|\\\leq & \epsilon_{n}\Big(\|  f_{i}(z_{n,i})-  f_{i}(p_{i})\|+\|  f_{i}(p_{i})- p_{i}\|\\& +\| T_{\mu_{n,i}}p_{i}- T_{\mu_{n,i}}z_{n,i}\|\Big)\\\leq &
 \epsilon_{n}\Big(  \alpha_{i}\|z_{n,i}-p_{i}\|+\|  f_{i}(p_{i})- p_{i}\|+\|z_{n,i}-p_{i}\|\Big)
\\\leq &\epsilon_{n}\left(  \alpha_{i}\|z_{n,i}-p_{i}\|+\|  f_{i}(p_{i})- p_{i}\|+\|z_{n,i}-p_{i}\|\right)\\
\leq & \epsilon_{n}\left((1+  \alpha_{i})\|z_{n,i}-p_{i}\|+\|  f_{i}(p_{i})- p_{i}\|\right)
 \\\leq &  \epsilon_{n}\left((1+  \alpha_{i})2M_{0,i}+\|  f_{i}(p_{i})- p_{i}\|\right)
\\=&\epsilon_{n}L_{0,i}\leq\frac{\delta}{2},
\end{align*}
for all $ n \geq N_{1}$. Observe that
\begin{align*}
z_{n,i}&=\epsilon_{n}  f_{i}(z_{n,i})+(1-\epsilon_{n})T_{\mu_{n,i}}z_{n,i}\\
&=T_{\mu_{n,i}}z_{n,i}+\epsilon_{n}\left(  f_{i}(z_{n,i})- T_{\mu_{n,i}}z_{n,i}\right)\\
&\in F_{\delta}(T_{t,i})+B_ {\frac{\delta}{2}} \\
&\subseteq F_{\delta}(T_{t,i})+2B_{\delta}\\
&\subseteq F_{\epsilon}(T_{t,i}).
\end{align*}
for each $ n \geq N_{1}$. Then we conclude that  \\
\indent \quad$\|z_{n,i}-T_{t,i}z_{n,i}\|\leq\epsilon\quad \rm{ (n \geq N_{1})}$.\\
Since $\epsilon > 0 $ is arbitrary, we conclude that
$\lim_{n\rightarrow \infty}\|z_{n,i}-T_{t,i}z_{n,i}\|=0 $.

 Step 3.  For each $i \in I$,  $\mathfrak{S}\{z_{n,i}\} \subset \rm{Fix(\sc_{i}) } $, where   $ \mathfrak{S}\{z_{n,i}\} $ denotes the set of strongly limit points of
 $ \{z_{n,i}\} $.\\
Proof. Let  $i \in I$. Consider
 $ z_{i}\in \mathfrak{S}\{z_{n,i}\} $ and let $\lbrace z_{n_{j},i}\rbrace $
 be a subsequence of $ \lbrace z_{n,i}\rbrace $ such
that $ z_{n_{j},i}\rightarrow z_{i} $.
\begin{align*}
   \Vert T_{t,i}z_{i}-z_{i}\Vert \leq&  \Vert T_{t,i}z_{i}-T_{t,i}z_{n_{j},i}\Vert + \Vert T_{t,i}z_{n_{j},i}-z_{n_{j}}\Vert+ \Vert z_{n_{j},i}-z_{i}\Vert \\ \leq & 2\Vert z_{n_{j},i}-z_{i}\Vert+ \Vert T_{t,i}z_{n_{j},i}-z_{n_{j},i}\Vert,
\end{align*}
applying step 2 we have,
\begin{align*}
   \Vert T_{t,i}z_{i}-z_{i}\Vert \leq 2\lim_{j}  \Vert z_{n_{j},i}-z_{i}\Vert+\lim_{j} \Vert T_{t,i}z_{n_{j},i}-z_{n_{j}}\Vert=0,
\end{align*}
hence,    $ z_{i}\in \rm{Fix(\sc_{i}) }$.

  Step 4.  For each $i \in I$, there exists a unique sunny nonexpansive retraction $ P_{i} $  of $ C _{i}$ onto $  \rm{Fix}(\sc_{i})$ and $ x_{i} \in C_{i} $ such that
\begin{align}\label{gbs}
  \Gamma_{i}  :=\limsup_{n}\langle x_{i}-P_{i}x_{i}\,,\,J(z_{n,i}-P_{i}x_{i})\rangle\leq0.
\end{align}
Proof. Applying  Lemma \ref{sunyn},   there exists a unique sunny nonexpansive retraction $ P_{i} $ of $ C_{i} $ onto
$ \rm{Fix(\sc_{i})} $. Using  Banach Contraction Mapping Principle, we have    that   $f_{i}P_{i}$ has a unique fixed point $x_{i}\in C_{i}$.  We prove  that
\begin{equation*}
  \Gamma_{i}  :=\limsup_{n}\langle x_{i}-P_{i}x_{i}\,,\,J(z_{n,i}-Px_{i})\rangle\leq0.
\end{equation*}

Observe  that, by the definition of
 $ \Gamma_{i} $  and by   the fact that  $C_{i}$ is a compact subset of $ E_{i} $,  we can select a subsequence $ \{z_{n_{j},i}\}$
 of $ \{z_{n,i}\}$ with the following properties:\\
(i)  $\displaystyle\lim_{j}\langle x_{i}-P_{i}x_{i}\,,\,J(z_{n_{j},i}-P_{i}x_{i})\rangle=\Gamma_{i}$;\\
(ii) $\{z_{n_{j},i}\}$ converges strongly to a point $z_{i}$;\\
applying  Step 3, we have $ z_{i} \in  \rm{Fix}(\sc_{i}) $.  From the fact that  $E_{i}  $  is smooth, we conclude
\begin{align*}
\Gamma_{i} =& \lim_{j}\langle  x_{i}-P_{i}x_{i}\,,\,J(z_{n_{j},i}-P_{i}x_{i})\rangle=\langle  x_{i}-P_{i}x_{i}\,,\,J(z_{i}-P_{i}x_{i})\rangle    \leq 0.
\end{align*}
 Since     $ f_{i}P_{i}x_{i}=x_{i}$, we have  $ ( f_{i}-I)P_{i}x_{i}=x_{i}-P_{i}x_{i} $. From    page 99  in \cite{tn}, we have, for each $n\in \mathbb{N}  $,
\begin{align*}
\epsilon_{n}  (   \alpha_{i} &-1)   \Vert  z_{n,i} -P_{i}x_{i} \big \Vert ^{2} \\ \geq &
\Big [ \epsilon_{n}   \alpha_{i} \big \Vert  z_{n,i} -P_{i}x_{i} \big \Vert+(1-\epsilon_{n}) \Vert z_{n,i}-P_{i}x_{i} \Vert \Big]^{2}-\Vert z_{n,i}-P_{i}x_{i} \Vert^{2} \\  \geq &
\Big [ \epsilon_{n}   \big \Vert  f(z_{n,i}) -f(P_{i}x_{i}) \big \Vert+(1-\epsilon_{n}) \Vert T_{\mu_{n,i}}z_{n,i}-P_{i}x_{i} \Vert \Big]^{2}-\Vert z_{n,i}-P_{i}x_{i} \Vert^{2} \\  \geq &
2\Big \langle \epsilon_{n}  \Big ( f(z_{n,i})-f(P_{i}x_{i})  \Big)\\ &+ (1-\epsilon_{n})(T_{\mu_{n,i}}z_{n,i}-P_{i}x_{i})-(z_{n,i}-P_{i}x_{i})\, , \,J(z_{n,i}-P_{i}x_{i}) \Big \rangle \\ = & -2\epsilon_{n} \langle  (  f-I)P_{i}x_{i}\, , \,J(z_{n,i}-P_{i}x_{i})\rangle  \\ = & -2\epsilon_{n} \langle x_{i}-P_{i}x_{i}\, , \,J(z_{n,i}-P_{i}x_{i})\rangle ,
\end{align*}
therefore,
\begin{align}\label{gbh}
 \Vert z_{n,i}-P_{i}x_{i} \Vert^{2}\leq \frac{2}{1-   \alpha_{i} }\langle x_{i}-P_{i}x_{i}\, , \,J(z_{n,i}-P_{i}x_{i})\rangle.
\end{align}

Step 5. $ \{z_{n,i}\}$  strongly converges to $P_{i}x_{i}$.\\
Proof.   \eqref{gbs}, \eqref{gbh} and  the fact that   $ P_{i}x_{i} \in  \rm{Fix}(\sc_{i})$, imply that
\begin{align*}
    \limsup_{n}\|z_{n,i}-P_{i}x_{i}\|^{2}\leq &  \frac{2}{1-   \alpha_{i}}\limsup_{n}\langle x_{i}-P_{i}x_{i} \,,\,J(z_{n,i}-P_{i}x_{i})\rangle\leq0.
\end{align*}
Hence, $z_{n,i} \rightarrow P_{i}x_{i}$.

 Step 6.    $\{g_{n}  \}$  converges to    $g$  in the product topology on $E^{I}$. \\
 Proof. As we know,  the topology on $E^{I}$ is that of  strongly pointwise convergence. Hence from \eqref{4} and  step 5, we conclude the results.
\end{proof}
\section{Examples and Corollaries}
In this section, we deal with some examples and corollaries.
\begin{co}
Let $I$ be a nonempty set. Let   $C_{i}$ be a nonempty compact convex subset of a real   Hilbert space $H$ for each $i \in I$.  Consider     the product space  $H^{I}$ with the  product topology generated by the strong topologies  on $H$.  Suppose that   $\{T_{i}\}_{i \in I}$ is a family of nonexpansive mappings from $C_{i}$ into itself such that $  \rm{Fix}(T_{i})\neq \emptyset$ for each $i \in I$.
   Suppose that $f_{i}$ is an $\alpha_{i}$-contraction on $ C_{i}$ for each $  i \in I$. Let $\epsilon_{n}$ be a sequence in $(0, 1)$ such that $\displaystyle \lim_{n} \epsilon_{n}=0$.
  Then there exists a unique sunny nonexpansive retraction $ P_{i} $  of $ C_{i} $ onto $  \rm{Fix}(T_{i})$ and $ x_{i} \in C_{i} $ for each $  i \in I$ such that  the sequence  $\{g_{n}:I\rightarrow H \}$ in $H^{I}$       generated by
 \begin{equation*}
  \left\{
\begin{array}{lr}
g_{n}(i)= z_{n,i},\qquad i \in I, \\
      z_{n,i}=   \epsilon_{n} f_{i}(z_{n,i})+(1-\epsilon_{n})\frac{1}{n}\sum_{k=1}^{n}T^{k}_{i}z_{n,i} \qquad  i \in I,
\end{array} \right.
\end{equation*}
      converges to the function  $g:I\rightarrow E$ defined by   $g(i)=P_{i}x_{i}$ in the product topology on $E^{I}$.
\end{co}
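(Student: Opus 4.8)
The plan is to deduce this corollary directly from Theorem \ref{g1} by realizing a single nonexpansive map as a representation of a commutative semigroup whose associated left regular means are exactly the Cesàro averages. First I would take $S=\n=\{1,2,3,\dots\}$ as the additive semigroup and, for each $i\in I$, define the representation $\sc_{i}=\{T_{k,i}:k\in S\}$ by $T_{k,i}:=T_{i}^{k}$. Since $T_{k+l,i}=T_{i}^{k+l}=T_{i}^{k}T_{i}^{l}=T_{k,i}T_{l,i}$ and each iterate $T_{i}^{k}$ is nonexpansive (being a composition of nonexpansive maps), $\sc_{i}$ is a representation of $S$ as nonexpansive mappings from $C_{i}$ into itself. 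A point is fixed by every $T_{i}^{k}$ if and only if it is fixed by $T_{i}$, so $\rm{Fix}(\sc_{i})=\rm{Fix}(T_{i})\neq\emptyset$, while the contraction hypothesis on $f_{i}$ and the condition $\lim_{n}\epsilon_{n}=0$ carry over verbatim.

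Next I would choose $X=B(S)$. As $S$ is commutative, $B(S)$ is amenable, is left invariant under each $l_{s}$, and contains the constants, so $1\in X$; moreover, for fixed $x\in C_{i}$ and $x^{*}\in H^{*}$ the orbit $\{T_{i}^{k}x\}$ lies in the compact set $C_{i}$, hence is bounded, so the function $k\mapsto\langle T_{k,i}x,x^{*}\rangle$ belongs to $X$. The crucial point is to supply the left regular sequence of means that generates the prescribed Cesàro iteration, for which I would set
\begin{equation*}
\mu_{n}(h):=\frac{1}{n}\sum_{k=1}^{n}h(k)\qquad(h\in B(S)),
\end{equation*}
each of which is plainly a mean. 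Writing $(l_{s}h)(k)=h(s+k)$, a telescoping estimate for $n\geq s$ gives
\begin{equation*}
\norm{l_{s}^{*}\mu_{n}-\mu_{n}}=\sup_{\norm{h}\leq1}\frac{1}{n}\Big|\sum_{k=n+1}^{n+s}h(k)-\sum_{k=1}^{s}h(k)\Big|\leq\frac{2s}{n}\longrightarrow0,
\end{equation*}
so $\{\mu_{n}\}$ is left regular as required.

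Finally I would identify the mean value with the Cesàro average: by the construction of $T_{\mu_{n},i}x=\int T_{k,i}x\,\ud\mu_{n}(k)$ recalled in the preliminaries (applicable since the orbit has weakly compact weak closure), every $x^{*}\in H^{*}$ satisfies $\langle T_{\mu_{n},i}x,x^{*}\rangle=(\mu_{n})_{k}\langle T_{i}^{k}x,x^{*}\rangle=\langle\frac{1}{n}\sum_{k=1}^{n}T_{i}^{k}x,x^{*}\rangle$, so by uniqueness $T_{\mu_{n},i}x=\frac{1}{n}\sum_{k=1}^{n}T_{i}^{k}x$. Hence the recursion in the corollary is exactly \eqref{4} for this representation and these means. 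Since every real Hilbert space is uniformly convex, it is strictly convex, reflexive and smooth, and each $C_{i}$ is compact and convex; therefore all hypotheses of Theorem \ref{g1} hold, and its conclusion yields the sunny nonexpansive retraction $P_{i}$ onto $\rm{Fix}(T_{i})$ together with the product-topology convergence of $\{g_{n}\}$ to $g$ with $g(i)=P_{i}x_{i}$. The only genuinely non-formal step is the verification of left regularity of the Cesàro means above; everything else is bookkeeping confirming that the abstract hypotheses specialize correctly.
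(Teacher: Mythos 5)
Your proof is correct and follows essentially the same route as the paper: both realize $T_{i}$ as the power representation $\{T_{i}^{k}:k\in\mathbb{N}\}$ of the additive semigroup $\mathbb{N}$, take the Ces\`{a}ro averages $\mu_{n}$ as the left regular sequence of means, identify $T_{\mu_{n},i}x$ with $\frac{1}{n}\sum_{k=1}^{n}T_{i}^{k}x$, and then invoke Theorem \ref{g1}. The only difference is one of detail, not of method: you verify left regularity explicitly (correctly, via the telescoping bound $\frac{2s}{n}$) and check the remaining hypotheses of Theorem \ref{g1} by hand, where the paper simply cites Takahashi's book for the left regularity of the Ces\`{a}ro means.
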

\begin{proof}
Let  $ \sc_{i}=\{T^{j}_{i}:j\in S\}  $  where  $ S = \{1,2, . . .\} $.  For a function $f=(x_{1}, x_{2}, ...) \in B(S)$, define
 \begin{equation*}
    \mu_{n}(f)=\frac{1}{n}\sum_{k=1}^{n}x_{k} \qquad( n \in \mathbb{N}),
\end{equation*}
then   $\{\mu_{n}\}$ is a left regular sequence of  means on $B(S)$  \cite{tn}. Hence, we have
\begin{equation*}
   T_{\mu_{n,i}}x=\frac{1}{n}\sum_{k=1}^{n}T^{k}_{i}x \qquad( n \in \mathbb{N}).
\end{equation*}
Then from Theorem \ref{g1}, we get the results.
\end{proof}

\begin{co}
Let $I$ be a nonempty set. Let   $C_{i}$ be a nonempty compact convex subset of a real   Hilbert space $H$ for each $i \in I$.  Consider     the product space  $H^{I}$ with the  product topology generated by the strong topologies  on $H$. Suppose that $ S = \mathbb{R}^{+}=\{t \in \mathbb{R}:0\leq t <+\infty\} $ and   $ \sc_{i}=\{T_{t,i}:t\in \mathbb{R}^{+}\}  $ be a representation of $\mathbb{R}^{+}$ as nonexpansive mapping from $C_{i}$ into itself such that $  \rm{Fix}(\sc_{i})\neq \emptyset$ for each $i \in I$. Let $X$ be a left invariant subspace of $C(\mathbb{R}^{+})$ such that $1\in X$.
   Suppose that $f_{i}$ is an $\alpha_{i}$-contraction on $ C_{i}$ for each $  i \in I$. Let $\{\epsilon_{n}\}\subseteq(0, 1)$ such that $\displaystyle \lim_{n} \epsilon_{n}=0$ and $\{ a_{n}\}\subseteq (0,\infty)$ such that  $\displaystyle\lim_{n}a_{n}= \infty$.
  Then there exists a unique sunny nonexpansive retraction $ P_{i} $  of $ C_{i} $ onto $  \rm{Fix}(T_{i})$ and $ x_{i} \in C_{i} $ for each $  i \in I$ such that  the sequence  $\{g_{n}:I\rightarrow H \}$ in $H^{I}$       generated by
 \begin{equation*}
  \left\{
\begin{array}{lr}
g_{n}(i)= z_{n,i},\qquad i \in I, \\
      z_{n,i}=   \epsilon_{n} f_{i}(z_{n,i})+(1-\epsilon_{n})\frac{1}{a_{n}}\int_{0}^{a_{n}}\!T_{t,i}z_{n,i}\ud t\quad\qquad ( n \in \mathbb{N}), \qquad  i \in I,
\end{array} \right.
\end{equation*}
      converges to the function  $g:I\rightarrow E$ defined by   $g(i)=P_{i}x_{i}$ in the product topology on $E^{I}$.
\end{co}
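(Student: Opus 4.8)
The plan is to reduce product-topology convergence to a coordinatewise strong-convergence statement and then to treat each coordinate $i \in I$ separately by a viscosity-type analysis of the implicitly defined net $\{z_{n,i}\}$. First I would note that each $C_i$, being compact in the reflexive space $E$, is weakly compact, so by the weak-compactness results of \cite{Ag} the weak closure of $\{T_{t,i}x : t \in S\}$ is weakly compact for every $x \in C_i$; this is precisely the hypothesis that makes the mean-averaged operator $T_{\mu_{n,i}} := \int T_{t,i}(\cdot)\ud\mu_n(t)$ well defined, and Theorem \ref{tu} then guarantees that $T_{\mu_{n,i}}$ is nonexpansive, fixes each point of $\mathrm{Fix}(\sc_i)$, and takes values in $\overline{\mathrm{co}}\{T_{t,i}x : t \in S\}$.

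Next I would solve the implicit scheme \eqref{4}: for fixed $n$ and $i$ the self-map $x \mapsto \epsilon_n f_i(x) + (1-\epsilon_n)T_{\mu_{n,i}}x$ of $C_i$ is a $\beta_n$-contraction with $\beta_n = 1 + \epsilon_n(\alpha_i - 1) \in [0,1)$, so the Banach contraction principle produces a unique $z_{n,i} \in C_i$. The technical heart is then the asymptotic fixed-point estimate $\lim_n \|z_{n,i} - T_{t,i}z_{n,i}\| = 0$ for each fixed $t \in S$. To establish it I would exploit left regularity of $\{\mu_n\}$: choosing $\delta > 0$ with $\overline{\mathrm{co}}\,F_\delta(T_{t,i}) + 2B_\delta \subseteq F_\epsilon(T_{t,i})$ and an $N$ for which the Ces\`aro averages $\frac{1}{N+1}\sum_{j=0}^N T_{t^j s,i}y$ lie in $F_\delta(T_{t,i})$, I would bound $\sup_{y \in C_i}\|T_{\mu_{n,i}}y - \int \frac{1}{N+1}\sum_{j=0}^N T_{t^j s,i}y\ud\mu_n(s)\|$ by $\max_j\|\mu_n - l^*_{t^j}\mu_n\|(3M_{0,i})$, which tends to $0$; combined with Theorem \ref{tu}(iii) this forces $T_{\mu_{n,i}}y \in F_\epsilon(T_{t,i})$ uniformly in $y$ for large $n$. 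Since $\lim_n \epsilon_n = 0$ makes $\epsilon_n\|f_i(z_{n,i}) - T_{\mu_{n,i}}z_{n,i}\| \to 0$, the defining identity $z_{n,i} = T_{\mu_{n,i}}z_{n,i} + \epsilon_n(f_i(z_{n,i}) - T_{\mu_{n,i}}z_{n,i})$ transfers the approximate-fixed-point property from $T_{\mu_{n,i}}z_{n,i}$ to $z_{n,i}$ itself.

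With asymptotic regularity in hand, a three-term triangle inequality shows that every strong limit point of $\{z_{n,i}\}$ lies in $\mathrm{Fix}(\sc_i)$. To identify the limit I would invoke Lemma \ref{sunyn} for the unique sunny nonexpansive retraction $P_i$ of $C_i$ onto $\mathrm{Fix}(\sc_i)$ and take $x_i$ to be the unique fixed point of the contraction $f_i P_i$. Using compactness of $C_i$ to extract a subsequence that simultaneously realizes $\Gamma_i := \limsup_n \langle x_i - P_i x_i, J(z_{n,i} - P_i x_i)\rangle$ and converges strongly to some $z_i \in \mathrm{Fix}(\sc_i)$, the smoothness of $E$ (hence norm-to-weak$^*$ continuity of $J$) together with the characterizing inequality $\langle x_i - P_i x_i, J(z_i - P_i x_i)\rangle \leq 0$ for the sunny retraction yields $\Gamma_i \leq 0$. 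Expanding $\|z_{n,i} - P_i x_i\|^2$ through the subdifferential inequality on page 99 of \cite{tn} and the identity $(f_i - I)P_i x_i = x_i - P_i x_i$ gives $\|z_{n,i} - P_i x_i\|^2 \leq \frac{2}{1-\alpha_i}\langle x_i - P_i x_i, J(z_{n,i} - P_i x_i)\rangle$, and feeding $\Gamma_i \leq 0$ into this bound forces $z_{n,i} \to P_i x_i$ strongly.

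Finally, since the product topology on $E^I$ is exactly coordinatewise strong convergence, these per-coordinate strong limits assemble into $g_n \to g$ in $E^I$ with $g(i) = P_i x_i$. I expect the asymptotic fixed-point estimate of the second paragraph to be the main obstacle, as it is the one place where left regularity of the means, the geometry of the approximate-fixed-point sets, and the vanishing of $\epsilon_n$ must be combined, and combined uniformly over the orbit; the remaining steps are comparatively standard viscosity-approximation arguments carried out coordinatewise.
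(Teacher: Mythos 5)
Your proposal has a genuine gap: it is a proof of the paper's main theorem, not of this corollary. What you outline is a coordinatewise re-run of Theorem \ref{g1}, whose hypotheses include a \emph{given} left regular sequence of means $\{\mu_{n}\}$ and whose scheme is written in terms of the abstract operators $T_{\mu_{n,i}}$. But the corollary at hand supplies no means at all; its data are a representation of $S=\mathbb{R}^{+}$, a subspace $X\subseteq C(\mathbb{R}^{+})$, and a sequence $a_{n}\to\infty$, and its scheme involves the concrete integral averages $\frac{1}{a_{n}}\int_{0}^{a_{n}}T_{t,i}z_{n,i}\,\mathrm{d}t$. The entire content of the corollary, once Theorem \ref{g1} is available, is the bridge you never build: define $\mu_{n}(f)=\frac{1}{a_{n}}\int_{0}^{a_{n}}f(t)\,\mathrm{d}t$ for $f\in X$, check that each $\mu_{n}$ is a mean ($\|\mu_{n}\|=\mu_{n}(1)=1$), check that the mean-averaged operator coincides with the integral average, i.e. $T_{\mu_{n,i}}x=\frac{1}{a_{n}}\int_{0}^{a_{n}}T_{t,i}x\,\mathrm{d}t$ (uniqueness of the representing vector $\int T_{t,i}x\,\mathrm{d}\mu_{n}(t)$), and prove left regularity:
\begin{align*}
\|l_{s}^{*}\mu_{n}-\mu_{n}\| &= \sup_{\|f\|\leq 1}\frac{1}{a_{n}}\Big|\int_{0}^{a_{n}}f(s+t)\,\mathrm{d}t-\int_{0}^{a_{n}}f(t)\,\mathrm{d}t\Big| \\
&= \sup_{\|f\|\leq 1}\frac{1}{a_{n}}\Big|\int_{a_{n}}^{a_{n}+s}f(u)\,\mathrm{d}u-\int_{0}^{s}f(u)\,\mathrm{d}u\Big| \leq \frac{2s}{a_{n}}\longrightarrow 0 \qquad (s\in\mathbb{R}^{+}).
\end{align*}
This last estimate is the only place where the hypothesis $\lim_{n}a_{n}=\infty$ can enter, and your proposal never uses that hypothesis anywhere --- a telltale sign of the omission. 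If $a_{n}$ does not tend to infinity (say $a_{n}\equiv 1$), the $\mu_{n}$ are not left regular and your Step-2-type estimate $\lim_{n}\|z_{n,i}-T_{t,i}z_{n,i}\|=0$ has no proof, so the missing step is not cosmetic.

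Once the identification and left regularity are established, the paper simply cites Theorem \ref{g1} and is done --- a two-line proof. Your six-step analysis (contraction solvability of the implicit scheme, the approximate-fixed-point estimate via the sets $F_{\delta}(T_{t,i})$, strong limit points lying in $\mathrm{Fix}(\mathcal{S}_{i})$, the sunny retraction $P_{i}$ and $\Gamma_{i}\leq 0$, the inequality $\|z_{n,i}-P_{i}x_{i}\|^{2}\leq\frac{2}{1-\alpha_{i}}\langle x_{i}-P_{i}x_{i},J(z_{n,i}-P_{i}x_{i})\rangle$, and coordinatewise assembly) faithfully reproduces the proof of Theorem \ref{g1} and would be fine \emph{there}; but as a proof of this corollary it is simultaneously redundant (the theorem can simply be quoted) and incomplete (it analyzes an abstract scheme that has not been shown to coincide with the scheme in the statement).
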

\begin{proof}
  For a function $f\in C(\mathbb{R}^{+})$, define
 \begin{equation*}
    \mu_{n}(f)=\frac{1}{a_{n}}\int_{0}^{a_{n}}\!f(t)\ud t \qquad( n \in \mathbb{N}),
\end{equation*}
then   $\{\mu_{n}\}$ is a left regular sequence of  means on $B(S)$  \cite{tn}. Hence, we have
\begin{equation*}
   T_{\mu_{n,i}}x=\frac{1}{a_{n}}\int_{0}^{a_{n}}\!T_{t, i}x \ud t \qquad( n \in \mathbb{N}).
\end{equation*}
Then from Theorem \ref{g1}, we get the results.
\end{proof}
\begin{center}
\end{center}

\end{large}
\end{document}